\newtheorem{THM}{Theorem}
\newtheorem{COR}[THM]{Corollary}
\newtheorem{thm}{Theorem}
\newtheorem{lem}[thm]{Lemma}
\theoremstyle{definition}
\newtheorem{quest}{Question}
\DeclareMathOperator{\aut}{Aut}
\DeclareMathOperator{\out}{Out}
\newcommand{\cato}{\operatorname{CAT}(0)}
\newcommand{%
	    \def\svgwidth{\columnwidth}
	        \import{./figures/}{.pdf_tex}
	}[1]{%
	    \def\svgwidth{\columnwidth}
	        \import{./figures/}{#1.pdf_tex}
	}
\begin{document}
\title{\Large Some new CAT(0) free-by-cyclic groups}
\author{Rylee Alanza Lyman}
\date\today
\maketitle

\begin{abstract}
We show the existence of
several new infinite families of polynomially-growing automorphisms of free groups
whose mapping tori are CAT(0) free-by-cyclic groups.
Such mapping tori are \emph{thick,} and thus not relatively hyperbolic.
These are the first families comprising infinitely many examples 
for each rank of the nonabelian free group;
they contrast strongly with Gersten's example of a thick free-by-cyclic group
which cannot be a subgroup of a CAT(0) group.
\end{abstract}

Let $F_3 = \langle a,b,c \rangle$ denote, with basis,
a free group of rank $3$.
Consider the following automorphisms of $F_3$
\[
	\Theta \begin{dcases}
		a \mapsto a \\
		b \mapsto ba \\
		c \mapsto ca^2
	\end{dcases}
	\qquad
	\Phi \begin{dcases}
		a \mapsto a \\
		b \mapsto a^{-1}ba \\
		c \mapsto a^{-2}ca^2
	\end{dcases}
	\qquad
	\Psi \begin{dcases}
		a \mapsto a \\
		b \mapsto aba \\
		c \mapsto a^2ca^2.
	\end{dcases}
\]
In \cite{GerstenFreeByZ}, Gersten showed that the \emph{mapping torus} of $\Theta$, 
in other words the \emph{free-by-cyclic} group $F_3\rtimes_\Theta\mathbb{Z}$,
cannot act properly by semi-simple isometries on a $\cato$ metric space---in particular,
$F_3\rtimes_\Theta\mathbb{Z}$ cannot be a subgroup of a $\cato$ group.

In contrast, the purpose of this note is to show that the mapping tori of $\Phi$, $\Psi$,
and indeed all other examples of their kind are themselves $\cato$ groups.

\begin{THM}
	\label{symmetric}
	Let $\Phi\colon F_n \to F_n$ be a polynomially-growing, symmetric automorphism.
	There exists an integer $k \ge 1$ such that the mapping torus of $\Phi^k$ 
	acts geometrically on a $\cato$ $2$-complex.
	The power $k$ satisfies $k < n!$.
	If the automorphism $\Phi$ is upper triangular, then the mapping torus of $\Phi$ is a $\cato$ group.
\end{THM}

\begin{COR}
	\label{palindromic}
	Let $\Phi\colon F_n \to F_n$ be a polynomially-growing, palindromic automorphism.
	There exists an integer $k \ge 1$ such that the mapping torus of $\Phi^k$
	acts geometrically on a $\cato$ $2$-complex.
	The power $k$ satisfies $k < n!$.
	If the automorphism $\Phi$ is upper triangular, then the mapping torus of $\Phi$ is a $\cato$ group.
\end{COR}

An automorphism $\Phi\colon F_n \to F_n$ is \emph{polynomially-growing} if for all $g \in F_n$,
the word length of $\Phi^k(g)$ grows at most polynomially in $k$.
Fix a free basis $x_1,\dotsc,x_n$ for $F_n$.
The automorphism $\Phi$ is \emph{symmetric} with respect to this basis if it
permutes the conjugacy classes of the $x_i$.
To wit, in this case there exist words $w_1,\dotsc,w_n$ in the $x_i$ such that
for each $i$ satisfying $1 \le i \le n$, we have
$\Phi(x_i) = w_i^{-1}x_jw_i$ for some $j$ satisfying $1 \le j \le n$.
Given a word $w$ in our free basis, write $\bar{w}$ for the \emph{reverse} of $w$,
e.g.~we have $\overline{x_1x_2} = x_2x_1$.
The automorphism $\Phi$ is \emph{palindromic} with respect to the
basis $x_1,\dotsc,x_n$ if for each $i$ satisfying $1 \le i \le n$, we have
$\Phi(x_i) = \bar w_ix_jw_i$ for some $j$ satisfying $1 \le j \le n$.
In particular, elements of our free basis are sent to \emph{palindromes}---words
spelled the same forwards and backwards.
Finally, in both of the above cases, the automorphism is \emph{upper triangular}
when we always have $i = j$,
and for each $i$ satisfying $1 \le i \le n$,
the word $w_i$ may be spelled using only the basis elements $x_1,\dotsc,x_i$.

\Cref{palindromic} is a corollary of the following theorem.

\begin{THM}
	\label{freeproductoffinite}
	Let $A$ be a finite group, let $W_n = A * \dotsb * A$
	denote the free product of $n$ copies of $A$,
	and let $\Phi\colon W_n \to W_n$ be a polynomially-growing automorphism.
	There exists an integer $k \ge 1$ such that the mapping torus of $\Phi^k$
	acts geometrically on a $\cato$ $2$-complex.
\end{THM}
	
As the similarities and distinctions between the automorphisms $\Theta$, $\Phi$ and $\Psi$ above illustrate,
free-by-cyclic groups form a varied and interesting class of finitely-presented groups
whose properties remain far from completely understood.
For instance, it is not known in general when a free-by-cyclic group
admits a geometric action on a $\cato$ space.
For a long time it was thought that perhaps a free-by-cyclic group 
would provide the first example of a hyperbolic group that
cannot act geometrically on a $\cato$ space.
Recently Hagen and Wise \cite{HagenWise,HagenWise2} showed that in fact
hyperbolic free-by-cyclic groups act geometrically on $\cato$ cube complexes,
and are thus \emph{virtually special.}
The free-by-cyclic groups we consider are not relatively hyperbolic:
Hagen \cite{HagenThick} notes that a result of Macura \cite{Macura}
implies that mapping tori of polynomially-growing automorphisms
are \emph{thick} in the sense of Behrstock--Dru\c{t}u--Mosher \cite{BehrstockDrutuMosher},
and thick groups are not nontrivially relatively hyperbolic.
A famous theorem originally due to Gautero and Lustig \cite{GauteroLustig} and 
independently given new proofs by Ghosh \cite{Ghosh} and Dahmani--Li \cite{DahmaniLi}
says that free-by-cyclic groups are hyperbolic relative to a canonical
collection of thick free-by-cyclic subgroups.
These are the \emph{subgroups of polynomial growth} defined in \cite{Levitt}.

The question of which free-by-cyclic groups are $\cato$ remains 
an interesting open problem in general \cite{BridsonProblems}.
This paper represents a major contribution to this question when
the rank of the free group is allowed to vary
and the free-by-cyclic group is assumed to be thick. 
Every $F_2$-by-$\mathbb{Z}$ group can be represented as a non-positively curved
punctured-torus bundle over the circle,
so every $F_2$-by-$\mathbb{Z}$ group is $\cato$.
In fact, Button and Kropholler \cite{ButtonKropholler} proved that every $F_2$-by-cyclic group
is the fundamental group of a non-positively curved cube complex of dimension $2$.

Brady and Soroko ask whether a free-by-cyclic group is $\cato$
if and only if it is virtually special \cite{BradySoroko}.
Our $\cato$ spaces, while 2-dimensional, are in general not cube complexes,
so a reader interested in Brady--Soroko's question may wish to investigate the following question.
\begin{quest}
	May these $\cato$ free-by-cyclic groups be cocompactly cubulated?
	Is the resulting cube complex virtually (co-)special?
\end{quest}

In another direction, \Cref{freeproductoffinite} suggests a more general statement might be true.
\begin{quest}
	When $W$ is a virtually free group with finite abelianization,
	are thick $W$-by-cyclic groups $\cato$?
\end{quest}

Our proof of \Cref{freeproductoffinite} is somewhat tailored to the case 
of free products of copies of a single finite group,
but perhaps there is some way to remove this restriction.

The $\cato$ 2-complex we construct is somewhat reminiscent of a graph manifold in construction.
One begins at the first level with a torus and progressively attaches cylinders
in such a way that the complex remains non-positively curved.
This is the space considered in a special case by Samuelson in \cite{Samuelson},
and the gluing is informed by Levitt's \emph{cyclic hierarchy} for thick free-by-cyclic groups.
The additional assumptions needed for our theorems assure
that the glued object is non-positively curved.

Here is the organization of this note. 
Gersten's non-example $\Theta$ is too cute to pass up;
in \Cref{Examples}
we sketch his proof very briefly,
explain the construction of Bridson--Haefliger 
and work an example of \Cref{palindromic} to illustrate the proof of \Cref{freeproductoffinite}.
A reader in a great hurry could skip this section
and proceed directly to the proof in \Cref{ProofOfMainTheorems}

\paragraph{Acknowledgments}
The author wishes to thank her advisor, Kim Ruane, for her unfailing enthusiasm in this project
and the author's development as a mathematician;
Santana Afton, Mladen Bestvina, Mark Hagen, and Robert Kropholler for many helpful discussions
about preliminary versions of these arguments; and Matthew Zaremsky for asking about symmetric automorphisms.

\section{(Non-)Examples and the Construction}
\label{Examples}
As a warm-up,
it will be instructive to give Gersten's non-example and compare it
with an example of \Cref{palindromic}.
In this section we also give an exposition of the construction of the CAT(0) 2-complex.

\subsection{Gersten's non-Example}
Gersten's example concerns the following automorphism $\Theta\colon F_3\to F_3$.
We write $F_3 = \langle a,b,c \rangle$.
\[ \Theta \begin{dcases} a \mapsto a \\
		b \mapsto ba \\
		c \mapsto ca^2
	\end{dcases}\quad F_3\rtimes_\Theta\mathbb{Z} = \langle a,b,c,t \mid
	tat^{-1} = a,\ tbt^{-1} = ba,\ tct^{-1} = ca^2 \rangle
\]
Gersten's first observation is to rewrite this group as a double
HNN extension of $\langle a,t\rangle \cong \mathbb{Z}^2$ with $b$ and $c$
as stable letters. He does this by rewriting the relators.
\[
	tbt^{-1} = ba \leadsto b^{-1}tb = at\quad\text{ and }\quad
	tct^{-1} = ca^2 \leadsto c^{-1}tc = a^2t \]
This observation generalizes:
every thick free-by-cyclic group has a finite-index subgroup that admits a
\emph{cyclic hierarchy,} a repeated graph-of-groups decomposition with
cyclic edge stabilizers and thick free-by-cyclic groups
of lower rank as vertex stabilizers, terminating with $\mathbb{Z}^2$.
Levitt records this fact as~\cite[Definition 1.1]{Levitt}.
This allows for arguments by induction.

We return to Gersten's proof.  
Suppose, aiming for a contradiction, that $F_3\rtimes_\Theta\mathbb{Z}$
acts properly by semi-simple isometries on a $\cato$ metric space $X$.
By the Flat Torus Theorem~\cite[Theorem II.7.1, p. 244]{theBible},
there is an isometrically embedded Euclidean plane $Y\subset X$.
This plane is preserved by $H = \langle a,t \rangle$, which acts on
$Y$ by translation, and the quotient $Y/H$ is a $2$-torus.

Fix a point $p \in Y$.
The content of the HNN extension is that in $F_3\rtimes_\Theta\mathbb{Z}$,
$t$, $at$ and $a^2t$ are all conjugate, so in the action of
$H$ on $Y$, these elements have the same translation length.
Thus there is a circle of radius $d(p,t.p)$ in $Y$ centered at $p$
that meets the points $t.p$, $at.p$ and $a^2t.p$.
But on the other hand, these three points lie on an axis
for the translation action of $a$. But in Euclidean geometry, a straight
line cannot meet a circle in three points. See \Cref{gerstenfig}.

\begin{figure}[!ht]
	\centering
	    \def\svgwidth{\columnwidth}
	        \import{./figures/}{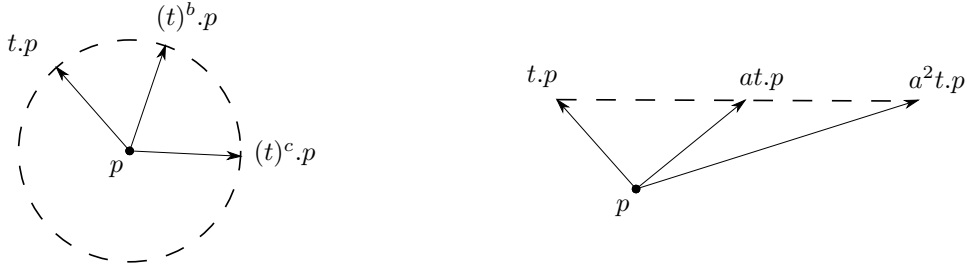}
	
	\caption{Gersten's example would force a line to intersect a circle in three points.}
	\label{gerstenfig}
\end{figure}

The lesson here is that in order for an HNN extension of a
$\cato$ group with cyclic associated subgroups to be a $\cato$
group, there must be a ``good reason'' for the generators 
of the associated cyclic subgroups
to have the same translation length.

\subsection{Bridson--Haefliger's Construction}
Bridson and Haefliger give a general construction providing
a sufficient condition for an HNN extension of a $\cato$
group to be a $\cato$ group. Because the geometry of this
space will be important to our arguments, we describe
their construction in the setting where the
associated subgroups are infinite cyclic.

\begin{thm}[\cite{theBible} Proposition II.11.21, p. 358]
	\label{construction}
	Let $H$ be a group acting properly and cocompactly
	on a $\cato$ space $X$. Given $x$ and $y$ infinite
	order elements of $H$ whose translation lengths
	on $X$ are equal, there is a $\cato$ space $Y$
	on which the HNN extension $G = H\ast_{x^t = y}$
	acts properly and cocompactly.
\end{thm}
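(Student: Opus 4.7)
The plan is to build $Y$ as a tree of spaces over the Bass--Serre tree $T$ of the HNN extension $G = H \ast_{x^t = y}$, placing a copy of $X$ at each vertex of $T$ and a flat Euclidean strip at each edge. The geometric input that makes this possible is the hypothesis that $x$ and $y$ have equal translation length: because $H$ acts properly and cocompactly, both elements act by semi-simple isometries on $X$ with non-empty min-sets, and these min-sets contain axes $\mathrm{Ax}(x), \mathrm{Ax}(y) \cong \mathbb{R}$ on which $\langle x \rangle$ and $\langle y \rangle$ act by translations of the common length $\tau$. Choosing basepoints and orientations gives a $\langle x \rangle$-equivariant isometry $\phi\colon \mathrm{Ax}(x) \to \mathrm{Ax}(y)$, where $\langle x \rangle$ is identified with $\langle y \rangle$ via $x \leftrightarrow y$.

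First I would form the ``double mapping cylinder''
\[ X^{\ast} = \bigl(X \sqcup (\mathbb{R} \times [0,1])\bigr)/{\sim}, \]
where $(s,0)$ is identified with the point of parameter $s$ on $\mathrm{Ax}(x)$ and $(s,1)$ with $\phi$ of that point on $\mathrm{Ax}(y)$, equipped with the induced quotient length metric. The space $Y$ is then the development of $X^\ast$ over $T$: fix a base vertex $v_0 \in T$ and attach a copy $X_v$ of $X$ at each vertex $v$ and a strip $M_e \cong \mathbb{R} \times [0,1]$ at each edge $e$, gluing $M_e \times \{0\}$ and $M_e \times \{1\}$ to the axes of the stable letter and its translates in the two adjacent vertex pieces using appropriate translates of $\phi$. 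Equivalently, $Y$ is the universal cover (in the sense of complexes of groups) of the quotient $X^\ast / H$.

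The hard part, and the main obstacle, is verifying that $Y$ is $\cato$. For this I would invoke the Reshetnyak/Bridson--Haefliger gluing theorem (\cite{theBible}, Theorem II.11.1, together with its extension to trees of $\cato$ spaces): gluing $\cato$ spaces along closed, convex, isometric subspaces in a pattern indexed by a tree yields a $\cato$ space. The vertex pieces are $\cato$ by hypothesis; each edge piece $\mathbb{R} \times [0,1]$ is $\cato$ as a flat strip; the gluing loci $\mathrm{Ax}(x)$ and $\mathrm{Ax}(y)$ are closed and convex because axes of semi-simple isometries split the min-set as a metric product with $\mathbb{R}$, and that $\mathbb{R}$-factor is convex. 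Simple connectivity of $Y$ is immediate from the tree-of-spaces structure, since $T$ is a tree and each piece is contractible.

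Finally, $G$ acts on $T$ with one vertex orbit (stabilizers conjugate to $H$) and one edge orbit (stabilizers conjugate to $\langle x \rangle$); letting $H$ act on $X_{v_0}$ and $\langle x \rangle$ act on each incident strip by translation, and extending equivariantly over $T$, produces an isometric action of $G$ on $Y$. Properness follows from properness of $H$ on $X$ (vertex pieces) and of $\langle x \rangle$ on $\mathbb{R} \times [0,1]$ (edge pieces); cocompactness follows from compactness of $H \backslash X$ combined with the transitivity of $G$ on vertices and edges of $T$.
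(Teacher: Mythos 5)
Your proposal is essentially the same construction that the paper (following Bridson--Haefliger) describes: copies of $X$ at the vertices of the Bass--Serre tree, flat strips $\mathbb{R}\times[0,1]$ at the edges, glued along equal-length axes of $x$ and $y$, with the $\cato$ property secured by the Reshetnyak-type gluing theorem applied over a tree pattern. Your ``double mapping cylinder $X^\ast$ developed over $T$'' is just a repackaging of the paper's explicit quotient of $G\times X \sqcup G\times S$, and your verification of properness and cocompactness matches. One small remark: your justification that the axes are convex is more elaborate than necessary---a complete geodesic in a $\cato$ space is automatically convex by uniqueness of geodesics, so there is no need to invoke the product splitting of the min-set.
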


Informally, the construction proceeds by ``blowing up'' the
Bass--Serre tree $T$ for the HNN extension. For each vertex
of $T$, $Y$ contains an isometric copy of $X$. When two vertices
share an edge, there is a \emph{strip,} that is, a space $S := \mathbb{R}\times[0,1]$ glued
in, with $\mathbb{R}\times\{0\}$ glued to one copy of $X$
along an axis for $x$, and $\mathbb{R}\times\{1\}$ glued
to another copy of $X$ along an axis for $y$. See \Cref{stripfig}.

In the case where $X$ is the universal cover of a space with
fundamental group $H$, one might imagine attaching a cylinder
to $X/H$ with one end attached along a loop representing $x$ and the
other along a loop representing $y$ in $\pi_1(X/H)$. If this is
done carefully, the universal cover of the resulting space is $Y$.

Formally, fix geodesic axes $\gamma$ and $\eta$ for the actions
of $x$ and $y$ on $X$, respectively. We think of
$\gamma$ and $\eta$ as isometric embeddings of $\mathbb{R}$
into $X$. Let $\alpha$ be the translation length of both 
$x$ and $y$ in $X$.
Recall that the vertices of the Bass--Serre tree $T$
correspond to cosets of $H$ in $G$ and edges of $T$
correspond to cosets of $K = \langle x \rangle$.
The vertices $gH$ and $gtH$ are connected by the edge
$gK$ in $T$.
Let $K$ act on $S$ by translation by $\alpha$ in the first factor.
The $\cato$ space $Y$ is a quotient of
the disjoint union $G \times X \cup G\times S$ by
the equivalence relation generated by the following,
where $g \in G$, $h \in H$, $x \in K$, $p \in X$,
$t \in \mathbb{R}$ and $\theta \in [0,1]$.
\begin{enumerate}
	\item $(gh,p) \sim (g,h.p)$
	\item $(gx,t,\theta) \sim (g,x.t,\theta)$
	\item $(g,\gamma(t)) \sim (g,t,0)$
	\item $(gt,\eta(t)) \sim (g,t,1)$
\end{enumerate}
The group $G$ acts on $Y$ by multiplication in the labels,
and it is easy to see that $Y$ contains distinct, isometrically
embedded copies of $X$ for each coset $G/H$, and likewise
for copies of $S$ indexed by the cosets $G/K$.

\begin{figure}[!ht]
	\centering
	    \def\svgwidth{\columnwidth}
	        \import{./figures/}{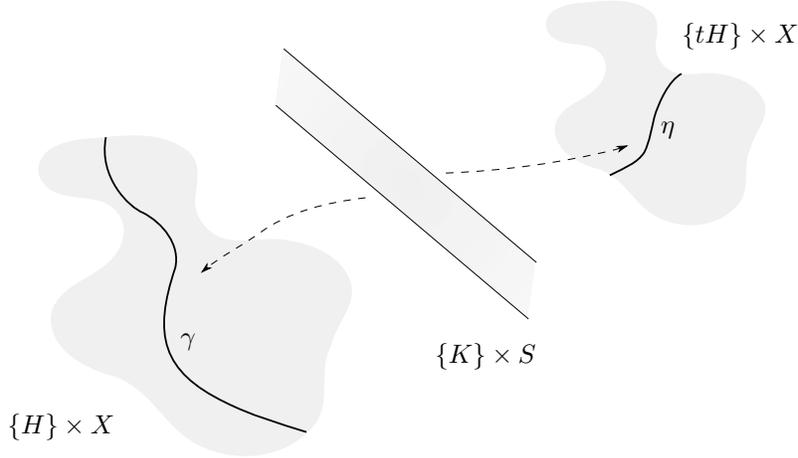}
	
	\caption{Gluing two copies of $X$ via a strip}
	\label{stripfig}
\end{figure}

\subsection{A Palindromic Automorphism of \texorpdfstring{$F_3$}{F3}}
\label{F3Example}
Let $W_n$ be the free product of $n$ copies of a cyclic group of order $2$,
\[ W_n = \underbrace{C_2*\dotsb*C_2}_{n\text{ copies}} = \langle a_1,\dotsc, a_n \mid a_i^2 =1,\ 1\le i \le n\rangle. \]
We have a homomorphism $W_n \to C_2$ sending each $a_i$ to the generator of $C_2$.
This map splits by sending $C_2$ to $a_1$,
and the kernel is free of rank $n-1$,
so $W_n = F_{n-1}\rtimes C_2$.
A free basis for the kernel is $a_1a_2,\dotsc,a_1a_n$.
In our example,
$n = 4$; we will write
$F_3 = \langle x,y,z \rangle$, and write $a$ for the generator
of the $C_2$ factor. We have $a^{-1}xa = x^{-1}$, and similarly 
for $y$ and $z$. Automorphisms of $F_{n-1}$ that commute
with the conjugation action of $a$ send basis elements to
palindromes.
Consider the following palindromic automorphism of $F_3$.
\[
	\Phi\begin{dcases}
		x \mapsto x \\
		y \mapsto xyx \\
		z \mapsto yzy
	\end{dcases}
	\qquad
	F_3\rtimes_\Phi\mathbb{Z} = 
	\langle x,y,z,t \mid [x,t]=1,\,(x^{-1}t)^y = xt,\,(y^{-1}t)^z = yt\rangle
\]
We use the exponential notation for conjugation $x^y = y^{-1}xy$.
Let $b = ax$, $c = ay$, and $d = az$ be the generators for $W_4$ as a free product of finite groups.
Setting $\Phi(a) = a$, we get an automorphism, still
called $\Phi$ 
\[
	\Phi\begin{dcases}
		a\mapsto a \\
		b \mapsto b \\
		c \mapsto bacab \\
		d \mapsto cadac.
	\end{dcases}
\]

Our aim is to inductively apply \Cref{construction}
to show that $F_3\rtimes_\Phi\mathbb{Z}$ is the
fundamental group of a $\cato$ $2$-complex. Along the way
we will also show the complex admits a compatible action of
$a$, so the resulting mapping torus, $W_4\rtimes_\Phi\mathbb{Z}$
acts properly and cocompactly on the same space.
Above we have rewritten the presentation for $F_3\rtimes_\Phi\mathbb{Z}$
to make the hierarchy clearer. Write $G_0 = \langle x,t\rangle \cong
\mathbb{Z}^2$, $G_1 = G_0\ast_{(xt)^y = x^{-1}t}$ and $G_2 =
G_1\ast_{(yt)^z = y^{-1}t} = F_3 \rtimes_\Phi \mathbb{Z}$.
Write $K_0 = \langle xt \rangle$ and $K_1 = \langle yt\rangle$,
respectively.

\paragraph{Step One.} The first $\cato$ space, $X_0$,
for $G_0$ to act on is the Euclidean plane by translation.
Letting $\vec x$ and $\vec t$ be the translation vectors 
for $x$ and $t$, notice that the translation lengths
of $xt$ and $x^{-1}t$ are equal to the lengths of the diagonals
of the parallelogram determined by $\vec x$ and $\vec t$.
This implies that $xt$ and $x^{-1}t$ have the same translation
length exactly when $\vec x$ and $\vec t$ are orthogonal.

Choosing $X_0$ so that $\vec x$ and $\vec t$ are orthogonal,
$X_0$ admits an isometric action of $a$ by reflection
across a fixed geodesic axis for $t$. Choose an axis
$\gamma$ for $xt$ and $\eta := a.\gamma$ for $x^{-1}t$. 
With this data, we apply \Cref{construction}
to yield a new $\cato$ space $X_1$ on which $G_1$ acts 
properly and cocompactly. 

\paragraph{Step Two.}
We extend $a$ to an isometry of $X_1$:
$a$ acts on the copy of $X_0$ corresponding to the identity
coset of $G_1/G_0$ as in the previous paragraph. If $h \in G_0$, $a$ takes
$h.\gamma$ to $h^a.\eta$, and vice versa, so we extend our definition of $a$ so that it
swaps the associated strips $S = \mathbb{R}\times[0,1]$
and sends $(s,\theta)$ to $(s,1-\theta)$.
More generally, $a$ takes $gG_0\times X_0$ to
$g^aG_0\times X_0$, takes $gK_0\times S$ to $g^ay^{-1}K_0$
and acts as above in the $X_0$ and $S$ factors.
One checks that because $a$ is an isometry of the pieces
and respects the gluing, this defines an isometry of $X_1$.

\paragraph{Step Three.}
This done, notice that $(yt)^a = y^{-1}t$, so these elements
must have the same translation length in $X_1$! Now we repeat:
applying \Cref{construction} one more time yields a $\cato$
space on which $G_2$ acts properly and cocompactly. In fact,
an identical argument as above allows us to again extend $a$
to an isometry of $X_2$, as desired.

Thus we see that our example satisfies the conclusions of 
\Cref{palindromic} and \Cref{freeproductoffinite}.

\section{Proof of the Main Theorem}
\label{ProofOfMainTheorems}

Let $A$ be a finite group or $\mathbb{Z}$, and write $W_n$ for the free product of $n$ copies of $A$.
We are interested in polynomially-growing automorphisms $\Phi\colon W_n \to W_n$
which permute the conjugacy classes of the $A$ in $W_n$.
(This is automatic if $A$ is finite, and is the assumption that $\Phi$ is symmetric if $A = \mathbb{Z}$.)
The key technical lemma we need is the following.
\begin{lem}
	\label{keylemma}
	Given a polynomially-growing automorphism $\Phi\colon W_n \to W_n$ as above, 
	there is an automorphism $\Phi'\colon W_n \to W_n$ in the same outer class as $\Phi$
	which is a root of an automorphism which is upper triangular
	with respect to some free product decomposition of $W_n$.
\end{lem}

Let us recall that if $\Phi$ and $\Phi'$ define the same outer automorphism of $W_n$,
then $W_n\rtimes_\Phi\mathbb{Z}$ and $W_n\rtimes_{\Phi'}\mathbb{Z}$ are isomorphic,
so there is no loss in passing from one to the other.
\emph{Upper triangular} here means that there is a free product decomposition 
$W_n = B_1*\dotsb*B_n$,
where each $B_i$ is conjugate to one of the original $A$,
and there exist $w_i \in B_1 * \dotsb * B_{i-1}$ such that
$\Phi(b_i) = w_i^{-1}b_iw_i$ for $b_i \in B_i$.

Assuming the lemma for now, we prove the main theorems.

\begin{proof}[Proof of \Cref{symmetric}]
	Suppose $\Phi\colon F_n \to F_n$ is a polynomially-growing symmetric automorphism.
	Then after replacing $\Phi$, 
	\Cref{keylemma} yields a basis $x_1,\dotsc,x_n$ for $F_n$
	and an automorphism $\Phi\colon F_n \to F_n$ which is upper-triangular.
	We have
	\[ F_n\rtimes_\Phi\mathbb{Z} = \langle x_1,\dotsc,x_n, t 
	\mid tx_kt^{-1} = w_k^{-1}x_kw_k \rangle, \]
	where each $w_k \in \langle x_1,\dotsc x_{k-1}\rangle$,
	and $w_1 = 1$.
	Note that the relation $tx_kt^{-1} = w_k^{-1}x_kw_k$ can be rewritten as
	$x_k^{-1}w_ktx_k = w_kt$, yielding a hierarchy for $F_n\rtimes_\Phi\mathbb{Z}$
	as an iterated HNN extension.
	The first stage is the base group $\langle x_1,t\rangle \cong \mathbb{Z}^2$.
	At each stage, the hypothesis of \Cref{construction} are obviously satisfied,
	so iteratively applying \Cref{construction} beginning with any geometric action of
	$\langle x_1,t\rangle$ on the Euclidean plane proves the result.
\end{proof}

To prove \Cref{freeproductoffinite}, we need a bit of group theory.
Let $A$ be a finite group and write $W_n$ for the free product of $n$ copies of $A$.
If $a \in A$, write $a_i$ for $a$ in the $i$th free factor.
There is a surjective homomorphism $W_n \to A$ sending each $a_i$ to $a$.
This map splits: send $a \in A$ to $a_1$.
The kernel is free of rank $(|A| - 1)(n-1)$, a free basis is given by
\[ a_1^{-1}a_2,\dotsc, a_1^{-1}a_n \]
as $a \in A \setminus \{1\}$ varies.
Thus $W_n \cong F \rtimes A$ for a free group $F$.

\begin{proof}[Proof of \Cref{freeproductoffinite}]
	Fix  a finite group $A$. We proceed by
	induction on $n$, the Kurosh rank of $W_n$. The base case is
	$n=2$. Since upper triangular automorphisms of $W_2$ are inner,
	we may consider the action on the (metric) product 
	$T \times \mathbb{R}$, where $T$ is a regular $|A|$-valent tree 
	on which $W_2$ acts geometrically with two orbits of vertices.
	
	So assume that for $k < n$ and for all polynomially-growing
	automorphisms $\Phi\colon W_k \to W_k$, the conclusions of the
	theorem hold. 
	By \Cref{keylemma}, we may
	without loss of generality assume that $\Phi$ is upper triangular.
	We assume (perhaps after replacing $\Phi$ by a power)
	that the mapping torus of
	$\Phi|_{ A_1*\dots*A_{n-1}}$ acts properly and
	cocompactly on a $\cato$ $2$-complex $X$.
	
	Recall our notation from above: for $a \in A$, we write $a_i$
	for the image of $a$ in $A_i$. We have $W_n = F\rtimes A$,
	where $F = \langle a_1^{-1}a_n \mid 2 \le i \le n\text{ and }a
	\in A\setminus\{1\}\rangle$. 
	By \Cref{keylemma},
	there is $w_n \in A_1*\dotsb*A_{n-1}$ 
	such that $\Phi(a_n) = w_n^{-1}a_nw_n$ for all $a_n \in A_n$.
	We may also assume $\Phi(a_1) = a_1$ for all $a_1 \in A_1$.
	We need that $w_n \in F\cap W_{n-1}$. This can be arranged by composing
	$\Phi$ with the inner automorphism corresponding to conjugation
	by some $a \in A_1$. This does not change the isomorphism type
	of the mapping torus of $\Phi|_{A_1*\dotsb*A_{n-1}}$, and we continue
	to work with $X$.

	If $A$ is not abelian, it may no longer be the case that $\Phi(a_1) = a_1$
	for all $a_1 \in A_1$. Restore this property by replacing $\Phi$ by a power.

	This done, notice that $ta_1^{-1}a_nt^{-1} = a_1^{-1}w_n^{-1}a_nw_n = 
	(w_n^{-1})^{a_1}a_1^{-1}a_nw_n$. 
	Recall our exponential notation $x^y = y^{-1}xy$.
	This implies that $a_1^{-1}a_n$,
	thought of as the stable letter for our HNN extension,
	conjugates $(w_nt)^{a_1}$ to $w_nt$.
	Since $a_1$ is an isometry of $X$, as in \Cref{F3Example},
	we may apply \Cref{construction} for each $a_n \in A_n$.
	We do this by first fixing an axis $\gamma$ for the action of $w_nt$ on $X$,
	and then using $a_1.\gamma$ as $a_1 \in A_1$ varies to work as the
	geodesic axes of interest.
	This yields a $\cato$ space $Y$ that
	$F\rtimes_{\Phi|_F}\mathbb{Z}$ acts on geometrically. We check that
	once again, the isometric actions of $a_1 \in A_1$ on $X$
	also extend to isometries of $Y$, proving the claim.
\end{proof}

\begin{proof}[Proof of \Cref{palindromic}]
	As we saw in the example in \Cref{Examples}, 
	palindromic automorphisms of free groups arise as the restriction to a finite-index free subgroup
	of automorphisms $\Phi\colon W_n \to W_n$ in the case where $A$ is cyclic of order two.
\end{proof}

\subsection{Train Track Maps}

To complete the proof, it remains to prove \Cref{keylemma}.
In this subsection we assume knowledge of the relative train track maps of \cite{BestvinaHandel}
as generalized to graphs of groups in \cite{MyThesis}.

Let $(\Gamma_n,\mathscr{G}_n)$ denote the following graph of groups.
The graph $\Gamma_n$ has $n+1$ vertices and $n$ edges:
one vertex has valence $n$ and the $n$ edges connect this vertex,
call it $\star$, to the remaining $n$ vertices.
The graph of groups structure $\mathscr{G}_n$ assigns the trivial group to the vertex
$\star$ and to the edges and assigns the group $A$ to each vertex.
Identify the fundamental group $\pi_1(\Gamma_n,\mathscr{G}_n,\star)$ with $W_n$.

Because the automorphism $\Phi$ preserves the conjugacy classes of the $A$ in $W_n$,
the automorphism $\Phi$ can be \emph{realized} as a map $(\Gamma_n,\mathscr{G}_n,\star)\to (\Gamma_n,\mathscr{G}_n,\star)$
in the sense of \cite[Chapter 2]{MyThesis} as a subdivision followed by a morphism of graphs of groups.
Therefore by \cite[Theorem 3.9.3]{MyThesis}, which uses the algorithm of \cite{BestvinaHandel},
there is a relative train track map $f\colon (\Gamma,\mathscr{G}) \to (\Gamma,\mathscr{G})$ 
representing the outer class $\varphi$ of $\Phi$ in $\out(W_n)$.
Because $\Phi$ is polynomially-growing by assumption,
the irreducible strata of $f$ have Perron--Frobenius eigenvalue $\lambda = 1$.
By passing to an iterate of $f$,
we may assume each irreducible stratum consists of a single edge $E$,
and by subdividing and choosing orientations, we may assume that $f(E) = Eu$,
where $u$ is a path in lower strata of $(\Gamma,\mathscr{G})$.

The graph of groups $(\Gamma,\mathscr{G})$ has $n$ vertices with stabilizer $A$.
Since $\Gamma_n$ was a tree, $\Gamma$ is a tree. 
Each leaf of $\Gamma$ is one of the $n$ vertices with stabilizer $A$.
Having passed to an iterate, each such vertex is fixed by $f\colon (\Gamma,\mathscr{G}) \to (\Gamma,\mathscr{G})$,
and the action of $f$ induces an automorphism of $A$.
By inspecting the action of $f$ on these vertices, we see the iterate required is bounded by $n!$.
By passing to a further iterate, since $\aut(A)$ is finite,
we may assume this automorphism is the identity for each such vertex.
This latter step is not needed for \Cref{symmetric} nor \Cref{palindromic}.

\begin{proof}[Proof of \Cref{keylemma}]
	The idea is to use the inductive hypothesis of \cite[Definition 1.1]{Levitt}.
	Namely, consider the top stratum of $(\Gamma,\mathscr{G})$.
	It is irreducible and thus consists of a single edge $E$.
	If $E$ separates $\Gamma$, collapsing the complement of $E$ determines a free product decomposition
	$W_n = G_1 * G_2$, and basing the fundamental group at the initial vertex of $E$
	provides a lift of $f$ to an automorphism of $\pi_1(\Gamma,\mathscr{G})$,
	call it $\Phi$,
	satisfying $\Phi(G_i) = G_i$.

	If $E$ does not separate $\Gamma$, 
	then the initial vertex of $E$ is a leaf of $\Gamma$ 
	and thus one of the $n$ vertices with stabilizer $A$.
	Base the fundamental group of $(\Gamma,\mathscr{G})$ as the terminal vertex of $E$,
	call it $v$,
	and choose a path $\sigma$ in $\Gamma\setminus E$ from $v$ to $f(v)$.
	Collapsing the complement of $E$ determines a free product decomposition
	$W_n = G_1 * A$,
	and the action of $f$ on $\pi_1(\Gamma,\mathscr{G},v)$
	as $\gamma \mapsto \sigma f(\gamma)\bar \sigma$ defines an automorphism
	$\Phi$ satisfying $\Phi(G_1) = G_1$ and
	$\Phi(A) = w^{-1}Aw$ for some $w \in G_1$.
	The proof follows by induction on $n$.
\end{proof}

\bibliography{bib.bib}
\bibliographystyle{alpha}
\end{document}